\newtheorem{prop}{Proposition}[section]
\newtheorem{thm}[prop]{Theorem}
\newtheorem{conj}[prop]{Conjecture}
\newtheorem{lem}[prop]{Lemma}
\theoremstyle{definition}
\newtheorem{defn}[prop]{Definition}
\newtheorem{rem}[prop]{\it Remark}
\newtheorem{emp}[prop]{Example}
\newtheorem*{mthm}{Main Theorem}
\newtheorem*{claim*}{Claim}
\DeclareMathOperator{\Aut}{Aut}
\DeclareMathOperator{\Iso}{Iso}
\DeclareMathOperator{\Ricci}{Ric}
\DeclareMathOperator{\ddbar}{\partial\overline{\partial}}
\newcommand{\bC}{\mathbb{C}}
\newcommand{\bR}{\mathbb{R}}
\newcommand{\bZ}{\mathbb{Z}}
\numberwithin{equation}{section}
\title[]{finite generation of the ring of holomorphic functions with polynomial growth on the K\"{a}hler-Ricci shrinker}
\author{Jiangtao Li}
\address{Department of Mathematics, University of California San Diego, 9500 Gilman Drive, La Jolla, CA}
\email{jil320@ucsd.edu}
\begin{document}

\maketitle

\begin{abstract}
    Let $(X,g,J,f)$ be a non-compact gradient shrinking K\"{a}hler-Ricci soliton. We prove that if the scalar curvature of $X$ satisfies a mild assumption, then $\mathcal{O}_P(X)$, the ring of holomorphic functions with polynomial growth on $X$, is finitely generated. This gives a partial confirmation to a conjecture of Munteanu and Wang (cf.\cite{MW14}). 
\end{abstract}

\section{Introduction}

The classical Liouville theorem asserts that any holomorphic function on $\bC^n$ with polynomial growth of order at most $d$ is indeed a polynomial function of degree at most $d$. In particular, this implies that the ring of holomorphic functions with polynomial growth is finitely generated. As part of his program of the uniformization theorems for complete K\"{a}hler manifolds, Yau made the following conjecture, which generalizes the classical theorem (cf.\cite{SY94}): 
\begin{conj}\label{conj:YausConjecture}
    Let $(X^n,g,J)$ be a complex $n$ dimensional complete K\"{a}hler manifold with nonnegative bisectional curvature and $\mathcal{O}(X)$ be the ring of holomorphic functions on $X$. Define the space of holomorphic functions with polynomial growth of order at most $d$ by
    \begin{equation*}
        \mathcal{O}_d(X)=\{f\in\mathcal{O}(X):\exists C>0, |f(x)|\leq C(1+r(x))^d\}.
    \end{equation*}
    Then
    \begin{enumerate}
        \item $\dim\mathcal{O}_d(X)\leq \dim\mathcal{O}_d(\bC^n)$; 
        \item the ring of all holomorphic functions with polynomial growth 
        \begin{equation*}
            \mathcal{O}_P(X)=\cup_{d\ge 0}\mathcal{O}_d(X)
        \end{equation*}
        is finitely generated.
    \end{enumerate}
\end{conj}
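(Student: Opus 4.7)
My approach is to attack both parts of Conjecture~\ref{conj:YausConjecture} in tandem, combining a sharp three-circle / frequency-type estimate on $X$ (which yields the dimension upper bound) with a Cheeger--Colding--Tian analysis of the tangent cone at infinity together with H\"ormander $L^2$-estimates on $X$ (which yield generators for $\cO_P(X)$).

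For part (1), the first step is a three-circle theorem: for every $u\in\cO(X)$ and basepoint $p$, the function $r\mapsto\log M_u(r):=\log\sup_{B_r(p)}|u|$ is convex in $\log r$. One proves this by noting that $\log|u|$ is plurisubharmonic, hence subharmonic, and applying the Laplace comparison from $\Ricci\ge 0$ to get sub-mean-value inequalities on $X$ that exactly parallel the Euclidean case; this is the K\"ahler analog of the Colding--Minicozzi three-annulus inequality, and the sharp form requires the bisectional hypothesis to handle the equality case via a Cheeger--Gromoll style splitting. Given the three-circle estimate, one bounds $\dim\cO_d(X)$ by showing the jet map $\cO_d(X)\to J^N_p(X)$ is injective for $N$ linear in $d$: convexity forces a holomorphic function vanishing to high order at $p$ to decay faster than $r^d$ at infinity, hence to vanish. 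Comparing with the known value of $\dim\cO_d(\bC^n)$ yields $\dim\cO_d(X)\le\dim\cO_d(\bC^n)$.

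For part (2), the plan is to produce a finite generating set from the tangent cone at infinity. Blow-down sequences $(X,r_k^{-2}g,p)$ subconverge in the pointed Gromov--Hausdorff topology to a metric cone $X_\infty$, and under nonnegative bisectional curvature Cheeger--Colding--Tian endow its regular part with a K\"ahler structure; the Donaldson--Sun partial $C^0$-estimate together with Liu's refinement then identify $X_\infty$ homeomorphically with a normal affine algebraic variety $V$ whose coordinate ring is finitely generated, say by $\bar u_1,\ldots,\bar u_N\in\cO_{d_0}(V)$. Using H\"ormander's $L^2$-estimates on $X$ with a plurisubharmonic weight built from $r^2$, I would lift each $\bar u_j$ to a holomorphic function $u_j\in\cO_{d_0}(X)$ whose rescaled limit on $(X,r_k^{-2}g)$ equals $\bar u_j$. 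A downward induction on growth order then shows every $u\in\cO_d(X)$ is a polynomial in $u_1,\ldots,u_N$: the leading asymptotic of $u$, read off on $X_\infty$, agrees with a polynomial $P(\bar u_1,\ldots,\bar u_N)$ in the generators, and the error $u-P(u_1,\ldots,u_N)$ drops into $\cO_{d-\varepsilon}(X)$ for some gap $\varepsilon>0$ controlled by the graded structure of $\bC[V]$; the dimension bound from (1) guarantees termination.

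The main obstacle I expect is the absence of a maximal volume growth hypothesis in the conjecture. The Donaldson--Sun identification of $X_\infty$ with an affine variety requires non-collapsing, and without Euclidean volume growth the tangent cone can collapse and the algebraic structure breaks down; neither the construction of generators nor the $L^2$-lifting step is then directly available. Handling this will likely require either a substitute for Donaldson--Sun exploiting the scaling $\bR_{>0}$-action on the asymptotic cone together with a K\"ahler Cheeger--Gromoll-type splitting to reduce to a lower-dimensional factor of maximal volume growth, or else a direct induction on dimension using $\dim\cO_d(X)\le\dim\cO_d(\bC^n)$ from (1) to control the number of generators in each degree. A secondary technical difficulty is matching the lifts $u_j$ to the algebraic generators $\bar u_j$ quantitatively, which requires a Cheeger--Colding type convergence of holomorphic sections that is noticeably more delicate than convergence of the metrics alone.
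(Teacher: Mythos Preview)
There is no proof in the paper to compare your proposal against: the statement you are attempting to prove is Conjecture~\ref{conj:YausConjecture}, which the paper states only as historical background in the introduction and does not prove. The paper explicitly attributes part~(1) to Ni, Chen--Fu--Le--Zhu, and Liu, and part~(2) to Liu, via citations; its own contribution (the Main Theorem) concerns a different class of manifolds, namely non-compact gradient shrinking K\"ahler--Ricci solitons satisfying a scalar curvature growth bound, not complete K\"ahler manifolds with nonnegative bisectional curvature. So you have targeted the wrong statement.

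If your intention was nonetheless to give an independent proof of Yau's conjecture, your outline broadly follows the strategy of Liu's resolution, but you have correctly identified its central weakness yourself: the Donaldson--Sun/Cheeger--Colding--Tian identification of the tangent cone with a normal affine variety, and the H\"ormander lifting step, both rely on non-collapsing, and the conjecture carries no volume growth hypothesis. Your proposed workarounds (a K\"ahler Cheeger--Gromoll splitting or an induction on dimension using part~(1)) are plausible directions but are not arguments; as stated the proposal has a genuine gap precisely at the point you flag. Separately, in part~(1) your jet-injectivity argument is too coarse to yield the \emph{sharp} bound $\dim\cO_d(X)\le\dim\cO_d(\bC^n)$: convexity of $\log M_u(r)$ in $\log r$ gives injectivity of the jet map only for $N$ roughly on the order of $d$, which bounds $\dim\cO_d(X)$ by the dimension of the jet space $J^N_p(X)$, a quantity strictly larger than $\dim\cO_d(\bC^n)$ in general. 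The sharp inequality requires Ni's heat-flow monotonicity or an equivalent refinement, not just the three-circle estimate.
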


The first part of the above conjecture was confirmed under the additional assumption that $X$ has maximal volume growth by Ni (cf.\cite{Ni04}) using a monotonicity formula along the heat flow on K\"{a}hler manifolds. By modifying Ni's method, the volume growth condition was removed in Chen-Fu-Le-Zhu (cf.\cite{CFYZ06}). More recently, by deploying a different approach, Liu (cf.\cite{Liu16b}) showed that it is true under the weaker assumption that $X$ has nonnegative holomorphic sectional curvature.

In contrast, the second part of the conjecture, known as finite generation conjecture, remained open until Liu gave a proof (cf.\cite{Liu16a}), among other things.

Similar Liouville type problems for K\"{a}hler-Ricci solitons were proposed and studied by Munteanu-Wang (cf.\cite{MW14}). They proved that $\dim(\mathcal{O}_d(X))<\infty$ for any shrinking gradient K\"{a}hler-Ricci soliton $(X,g,J,f)$ and get an upper bound of $\dim(\mathcal{O}_d(X))$ as a polynomial of $d$, which depends on the soliton structure. However, their method doesn't imply the finite generation of $\mathcal{O}_P(X)$. 

In their recent work (cf.\cite{SZ25}), Sun-Zhang introduced the notion of polarized Fano fibrations to study the Yau-Tian-Donaldson type conjectures. Among other things, they constructed a polarized Fano fibration structure on any K\"{a}hler-Ricci shrinker. With help of this structure theorem, we are able to resolve the finite generation problem on K\"{a}hler-Ricci shrinkers, under a mild assumption on the growth rate of scalar curvature. The main theorem of this work is:

\begin{mthm}\label{thm:MainTheorem}
    Let $(X,g,J,f)$ be a non-compact gradient K\"{a}hler-Ricci shrinking soliton. Let $R$ be the scalar curvature and assume that
    \begin{equation}\label{eq:ScalarCurvatureBound}
        \limsup_{r(x)\to\infty}\frac{R(x)}{(1+r(x))^2}=\gamma<1.
    \end{equation}
    where $r(x)$ is the distance from $x$ to some fixed point $p\in X$. Then the ring of holomorphic functions on $X$ with polynomial growth $\mathcal{O}_P(X)$ is isomorphic to the ring of regular functions on an affine variety. In particular, it is finitely generated.
\end{mthm}

\begin{rem}
    In general, it is known that $\gamma\leq 1$ for any K\"{a}hler-Ricci shrinker (see \ref{prop:FundamentalResultsOnShrinkers}). Therefore, \eqref{eq:ScalarCurvatureBound} is a mild assmuption. Indeed, all the known examples of K\"{a}hler-Ricci shrinkers have bounded scalar curvature. In complex dimension 2, this boundedness was proved by Li-Wang (cf.\cite{LW25}). It remains open in higher dimensions.
\end{rem}

\subsection*{Outline of the paper} In section 2, we review some fundamental results on gradient K\"{a}hler-Ricci solitons and derive a lower gradient estimate for the potential function from the assumption on the scalar curvature. In section 3, we review the notion of polarized Fano fibrations and its relation with gradient shrinking K\"{a}hler-Ricci solitons. This was established in \cite{SZ25}. In the last section, by combining the estimate of section 2 and the polarized Fano fibration structure, we build connections between the distance on the K\"{a}hler-Ricci soliton and the distance on the affine base in the fibration. This allows us to show that the holomorphic functions on $X$ with polynomial growth are exactly pullbacks of those on the affine base of the fibration. The main theorem then follows from a classical result on the holomorphic functions with polynomial growth on affine varieties.

\section{K\"{a}hler-Ricci shrinkers}

\begin{defn}[K\"{a}hler-Ricci shrinker]
    Let $(X,g,J)$ be a complete $n$ dimensional K\"{a}hler manifold and $f\in C^\infty(X,\bR)$ a smooth real valued function on $X$. Suppose that $\omega$ is the associated K\"{a}hler form and $\Ricci(\omega)$ the Ricci form of $\omega$. $(X,g,J,f)$ is a \textit{gradient shrinking K\"{a}hler-Ricci soliton}, or \textit{K\"{a}hler-Ricci shrinker}, if the following equation holds 
    \begin{equation}\label{eq:KahlerRicciShrinker}
        \Ricci(\omega)+\sqrt{-1}\ddbar f=\omega.
    \end{equation}
    $f$ is called the \textit{potential function} of the K\"{a}hler-Ricci shrinker and $\xi=J\nabla f$ is called the \textit{soliton vector field} of the K\"{a}hler-Ricci shrinker.
\end{defn}

K\"{a}hler-Ricci shrinkers arise naturally as singularity models in the context of K\"{a}hler-Ricci flows and have been studied extensively in both compact and non-compact cases (cf.\cite{TZ00},\cite{WZ04},\cite{CDS24},\cite{CCD22},\cite{BCCD24},\cite{LW25}, etc.). 

We recall some of the basic results on K\"{a}hler-Ricci shrinkers:
\begin{prop}\label{prop:FundamentalResultsOnShrinkers}
    For any K\"{a}hler-Ricci shrinker $(X,g,J,f)$, the following hold:
    \begin{enumerate}
        \item $R+2|\nabla f|^2-2f$ is a constant. Therefore, we may normalize $f$ so that
        \begin{equation}\label{eq:NormalizationOnf}
            R+2|\nabla f|^2-2f=0.
        \end{equation}
        \item $R\ge 0$;
        \item $f$ is proper and bounded below. Let $p$ be a point where $f$ attains its minimum and $r(x)$ be the distance between $x$ and $p$. Suppose $f$ satisfies \ref{eq:NormalizationOnf}, then 
        the following estimates of the potential function $f$ hold:
        \begin{equation}\label{eq:BoundOnfByDistance}
            \frac{1}{2}(r(x)-c_1)_+^2\leq f(x) \leq \frac{1}{2}(r(x)+c_2)^2.
        \end{equation}
        Here $g_+=\max\{g,0\}$ and $c_1,c_2$ are constants depending only on the dimension $n$.
    \end{enumerate}
\end{prop}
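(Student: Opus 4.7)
The plan is to establish (1)--(3) in turn by adapting classical arguments of Hamilton, B.-L.\ Chen, and Cao--Zhou to the K\"ahler--Ricci setting. For (1), I would differentiate the soliton equation: since $\nabla f$ is real holomorphic on a K\"ahler--Ricci soliton, the $J$-anti-invariant part of $\nabla^2 f$ vanishes, so \eqref{eq:KahlerRicciShrinker} is equivalent to the real tensor identity $\Ricci+\nabla^2 f=g$. Tracing gives $R+\Delta f=2n$ (real dimension). Applying $\nabla^b$, using the contracted second Bianchi identity $\nabla^b R_{ab}=\tfrac{1}{2}\nabla_a R$ together with the commutator $\Delta\nabla_a f=\nabla_a(\Delta f)+R_{ab}\nabla^b f$, and substituting $R_{ab}=g_{ab}-\nabla_a\nabla_b f$ back in yields $\nabla_a R = 2\nabla_a f-\nabla_a|\nabla f|^2_{\mathrm{real}}$, so $R+|\nabla f|^2_{\mathrm{real}}-2f$ is constant. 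In the complex convention $|\nabla f|^2=g^{i\bar j}f_i f_{\bar j}=\tfrac{1}{2}|\nabla f|^2_{\mathrm{real}}$ customarily used on K\"ahler manifolds, this reads $R+2|\nabla f|^2-2f=\mathrm{const}$, and the additive freedom in $f$ sets the constant to zero.

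For (2), I would use that every K\"ahler--Ricci shrinker generates a self-similar ancient solution to the K\"ahler--Ricci flow by pullback along the flow of $-\nabla f$. Hamilton's maximum principle applied to the evolution $\partial_t R=\Delta R+2|\Ricci|^2$ on this ancient solution then gives $R\geq 0$, via B.-L.\ Chen's theorem for ancient solutions. A soliton-intrinsic alternative is to apply the drift Laplacian $L=\Delta-\nabla f\cdot\nabla$ to $R$: a Bochner-type calculation produces the elliptic identity $LR=R-2|\Ricci|^2$, and the $f$-weighted strong maximum principle (applicable because $\int_X e^{-f}\,dV<\infty$ on a K\"ahler--Ricci shrinker) rules out a strictly negative infimum.

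For (3), following Cao--Zhou, the normalization in (1) combined with $R\geq 0$ gives $|\nabla f|^2_{\mathrm{real}}=2f-R\leq 2f$, whence $f\geq 0$ on $X$ and $|\nabla\sqrt{2f}|_{\mathrm{real}}\leq 1$ wherever $f>0$. Integrating along a minimizing geodesic from a minimum point $p$ of $f$ yields the upper bound $f(x)\leq\tfrac{1}{2}(r(x)+c_2)^2$. For the lower bound I would analyze the integral curves of $\nabla f$: the soliton equation $\nabla^2 f=g-\Ricci$ controls the Hessian from above, and a careful argument shows that $|\nabla f|$ stays bounded away from zero outside a compact set. Comparing the gradient-flow parameter with the Riemannian distance then produces $\sqrt{2f(x)}\geq r(x)-c_1$, giving \eqref{eq:BoundOnfByDistance}, and properness of $f$ follows from either inequality.

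The main obstacle is the lower bound in (3). The upper bound drops out mechanically from the pointwise inequality $|\nabla\sqrt{2f}|\leq 1$, but the lower bound genuinely uses the shrinker structure: one must rule out degeneracy of $\nabla f$ at infinity and compare the gradient-flow parameter with geodesic distance, which is the technical heart of the Cao--Zhou argument. A minor logical subtlety is that the $f$-weighted-principle route to (2) appears to rely on the properness from (3); this is circumvented by deriving $R\geq 0$ first via the Ricci-flow approach, after which (3) is established unconditionally.
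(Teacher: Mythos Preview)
Your outline matches the paper's proof, which is essentially a pointer to the literature: (1) by trace and divergence of \eqref{eq:KahlerRicciShrinker}, (2) by B.-L.\ Chen's nonnegativity of scalar curvature for shrinkers (as in \cite{Cho23}), and (3) by Cao--Zhou \cite{CZ10}. Parts (1) and (2) of your sketch are fine, as is the upper bound in (3).

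The one genuine gap is your sketch of the \emph{lower} bound in (3). What you describe---showing $|\nabla f|$ stays bounded below outside a compact set and then comparing gradient-flow time with Riemannian distance---is not Cao--Zhou's argument, and as written it is circular: from $|\nabla f|^2_{\mathrm{real}}=2f-R$ alone you cannot bound $|\nabla f|$ uniformly away from zero at infinity without already knowing that $f$ outpaces $R$, which is precisely the quadratic growth you are trying to establish. (Indeed, the paper's Lemma~\ref{lem:GradientEstimate} extracts such a bound only \emph{after} Proposition~\ref{prop:FundamentalResultsOnShrinkers} and only under the extra hypothesis \eqref{eq:ScalarCurvatureBound}.) The actual Cao--Zhou proof works along a \emph{minimizing geodesic} $\gamma$ from $p$: one integrates $f''(s)=\lambda-\Ricci(\gamma',\gamma')$ and uses the second variation formula with suitably chosen test functions to bound $\int_0^{r}\Ricci(\gamma',\gamma')\,ds$ from above by a dimensional constant, which yields $f'(s)\geq s-c(n)$ for $s$ large and hence the quadratic lower bound. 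If you replace your gradient-flow paragraph with this second-variation step, the proposal goes through.
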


\begin{proof}
    (1) follows by taking the trace and divergence of \eqref{eq:KahlerRicciShrinker}. For (2), see \cite{Cho23}*{Theorem 2.14}. For (3), see \cite{CZ10} or \cite{Cho23}*{Corollary 2.15, Theorem 4.3}.
\end{proof}

Using the above proposition, a gradient estimate of $f$ under the curvature assumption \eqref{eq:ScalarCurvatureBound} can be derived:
\begin{lem}\label{lem:GradientEstimate}
    Under the assumption \eqref{eq:ScalarCurvatureBound}, for any $0<\beta<1-\gamma$, there exists $r_\beta>0$ depending on $\beta$, such that for any $x\notin B_{r_\beta}(p)$, 
    \begin{equation}
        |\nabla f(x)|^2\ge \beta f(x).
    \end{equation}
\end{lem}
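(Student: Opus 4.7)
The plan is to convert the desired gradient bound into a scalar-curvature bound via the normalization identity from \cref{prop:FundamentalResultsOnShrinkers}(1), and then read off the required inequality by comparing the upper bound on $R$ coming from \eqref{eq:ScalarCurvatureBound} with the lower bound on $f$ coming from \eqref{eq:BoundOnfByDistance}.

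More concretely, the normalization \eqref{eq:NormalizationOnf} gives
\begin{equation*}
    |\nabla f(x)|^2 = f(x) - \tfrac{1}{2}R(x),
\end{equation*}
so the claim $|\nabla f(x)|^2 \geq \beta f(x)$ is equivalent to
\begin{equation*}
    R(x) \leq 2(1-\beta)\,f(x).
\end{equation*}
This reformulation is the key observation; the rest is an asymptotic comparison.

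Now fix $\beta \in (0, 1-\gamma)$ and choose $\epsilon > 0$ small enough that $\gamma + \epsilon < 1 - \beta$; this is possible because $\beta < 1-\gamma$. From \eqref{eq:ScalarCurvatureBound}, there exists $r_0 = r_0(\epsilon)$ such that for $r(x) \geq r_0$,
\begin{equation*}
    R(x) \leq (\gamma + \epsilon)\,(1+r(x))^2.
\end{equation*}
From \eqref{eq:BoundOnfByDistance}, for $r(x) \geq c_1$,
\begin{equation*}
    2(1-\beta)\,f(x) \geq (1-\beta)\,(r(x)-c_1)^2.
\end{equation*}
Since $(1+r)^2/(r-c_1)^2 \to 1$ as $r \to \infty$ and $(\gamma+\epsilon)/(1-\beta) < 1$, there exists $r_\beta \geq \max\{r_0, c_1\}$ such that for all $r(x) \geq r_\beta$,
\begin{equation*}
    (\gamma + \epsilon)\,(1+r(x))^2 \leq (1-\beta)\,(r(x)-c_1)^2,
\end{equation*}
which yields $R(x) \leq 2(1-\beta)f(x)$ and hence the desired estimate.

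There is no real obstacle here; the argument is essentially algebraic once the normalization identity is applied. The only point requiring a little care is to fit the margin $\epsilon$ between $\gamma$ and $1-\beta$ so that the lower-order terms $c_1$ and the $1$ inside $(1+r)^2$ can be absorbed for $r$ sufficiently large. No curvature bound beyond \eqref{eq:ScalarCurvatureBound} and no differential inequality is needed.
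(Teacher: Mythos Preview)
Your proof is correct and follows essentially the same route as the paper: both rewrite the claim via the normalization identity as $R(x)\le 2(1-\beta)f(x)$, then compare the upper bound $R(x)\le(\gamma+\epsilon)(1+r(x))^2$ from \eqref{eq:ScalarCurvatureBound} with the lower bound $2f(x)\ge (r(x)-c_1)^2$ from \eqref{eq:BoundOnfByDistance}, absorbing the lower-order constants by taking $r(x)$ large. The only cosmetic difference is that the paper introduces two auxiliary parameters $\delta_1,\delta_2$ where you use a single $\epsilon$; your packaging is slightly cleaner but the content is identical.
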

\begin{proof}
    Since $\beta<1-\gamma$, we may choose $\delta_1>0$ such that $(1-\beta)(1-\delta_1)^2>\gamma$. Suppose that when $r(x)>r_1$, there holds
    \begin{equation}\label{eq:GradientEstimateEq1}
        r(x)-c_1>(1-\delta_1)r(x).
    \end{equation}
    Let $\delta_2>0$ and $\gamma<1-\delta_2<(1-\beta)(1-\delta_1)^2$. By the assumption on the scalar curvature \eqref{eq:ScalarCurvatureBound}, there is $r_2>0$ and when $r(x)>r_2$, 
    \begin{equation}\label{eq:GradientEstimateEq2}
        \frac{R(x)}{(1+r(x))^2}\leq 1-\delta_2.
    \end{equation}
    Let $r_\beta=\max\{r_1,r_2\}$. It follows from \eqref{eq:NormalizationOnf}, \eqref{eq:BoundOnfByDistance}, \eqref{eq:GradientEstimateEq1} and \eqref{eq:GradientEstimateEq2} that if $r(x)\ge r_\beta$,
    \begin{align*}
        & |\nabla f(x)|^2=f(x)-\frac{R(x)}{2}=f(x)\left(1-\frac{R(x)}{2f(x)}\right)\ge f(x)\left(1-\frac{R(x)}{(r(x)-c_1)^2}\right) \\
        & > f(x)\left(1-\frac{1-\delta_2}{(1-\delta_1)^2}\right) >\beta f(x).
    \end{align*}
    The last inequality makes use of the relation $1-\delta_2<(1-\beta)(1-\delta_1)^2$.
\end{proof}

All the results above actually hold for a general Ricci shrinker without K\"{a}hler assumption. The special property which a K\"{a}hler-Ricci soliton possesses is that there is a natural toric action on $X$ associated with the soliton structure (cf.\cite{Bry08},\cite{CDS24},\cite{SZ25}), as shown in the following lemma.

\begin{lem}\label{lem:ToricActionOnX}
    The soliton vector field $\xi$ induces a holomorphic, isometric toric action.
\end{lem}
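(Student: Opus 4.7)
The plan is to first establish that $\xi=J\nabla f$ is a real holomorphic Killing vector field, and then to identify the desired torus as the closure of its one-parameter flow inside the isometry group.

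For the holomorphic and Killing properties, the preservation of $\omega$ is direct: using $\omega(U,V)=g(JU,V)$, we compute $i_\xi\omega = g(J^2\nabla f,\cdot) = -df$, so that $\mathcal{L}_\xi\omega=d(-df)=0$ since $\omega$ is closed. The holomorphicity $\mathcal{L}_\xi J=0$ amounts to $\nabla^{1,0}f$ being a holomorphic section of $T^{1,0}X$, equivalently $\nabla_i\nabla_j f=0$ in complex coordinates (vanishing of the $(2,0)+(0,2)$-part of the real Hessian). This is a classical consequence of the K\"ahler-Ricci soliton structure: since both $\Ricci(\omega)$ and $\omega$ are $J$-invariant, combining the soliton equation $R_{i\bar j}+\nabla_i\nabla_{\bar j}f=g_{i\bar j}$ with Bianchi-type identities on the K\"ahler manifold forces the real Hessian $\nabla^2 f$ to be $J$-invariant (see e.g.\ \cite{Bry08}, \cite{CDS24}). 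Given $\mathcal{L}_\xi J=0$ together with $\mathcal{L}_\xi\omega=0$, the compatibility $g=\omega(\cdot,J\cdot)$ yields $\mathcal{L}_\xi g=0$.

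For the toric part, observe first that $\xi(f)=df(J\nabla f)=g(\nabla f,J\nabla f)=0$ by $J$-skewness of $g$, so the flow $\varphi_t$ of $\xi$ preserves every level set of $f$. By Proposition~\ref{prop:FundamentalResultsOnShrinkers}(3), $f$ is proper, so each level set is compact and hence every $\xi$-orbit is precompact in $X$. By Myers--Steenrod, $\Iso(X,g,J)$ is a finite-dimensional Lie group acting properly on $X$. Let $G:=\overline{\{\varphi_t:t\in\bR\}}\subseteq \Iso(X,g,J)$; it is a connected abelian Lie subgroup. For any $p\in X$, the orbit $G\cdot p=\overline{\{\varphi_t(p)\}}$ is compact (being contained in a level set of $f$), while properness of the action forces $\mathrm{Stab}_G(p)$ to be compact. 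Since $G/\mathrm{Stab}_G(p)\cong G\cdot p$ is compact, the group $G$ itself is compact. A connected compact abelian Lie group is a torus $\bT^k$, and its action on $X$ is holomorphic and isometric by construction.

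The main obstacle is Step~1, deducing $\nabla_i\nabla_j f=0$ from the K\"ahler-Ricci soliton equation, which relies on the subtle interplay between the $J$-invariance of the Ricci tensor and the soliton identity; Step~2 is then a routine closure argument built on the properness of $\Iso(X,g,J)$ and the compactness of level sets of $f$.
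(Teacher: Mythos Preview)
Your argument is correct and fills in more detail than the paper's own proof. The paper proceeds identically in Step~1 (simply asserting that $\xi=J\nabla f$ is real holomorphic and Killing as a consequence of \eqref{eq:KahlerRicciShrinker}), but in Step~2 it takes a shortcut: it invokes that the full group $\Aut^0(X)\cap\Iso^0(X)$ is compact, and then concludes that the closure of the one-parameter subgroup is a torus. You instead avoid appealing to this global compactness and argue directly for the closure $G$ by exploiting the soliton geometry: $\xi(f)=0$ confines every orbit to a compact level set of the proper function $f$, and properness of the isometry action (Myers--Steenrod) then forces $G$ to be compact. Your route is more self-contained, since the compactness of $\Aut^0(X)\cap\Iso^0(X)$ for a non-compact shrinker is itself typically justified by the same mechanism (any isometry must preserve the normalized potential $f$, hence has precompact orbits). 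One minor point worth making explicit in your write-up: to conclude $G\cdot p$ is compact (not merely precompact) you are using that $G$, as a closed subgroup of $\Iso(X,g)$, still acts properly, so its orbits are closed; alternatively, the inclusion $\{\varphi_t\}\subseteq\{g\in\Iso:g(p)\in f^{-1}(f(p))\}$ already shows $G$ sits inside a compact set, giving compactness of $G$ in one stroke without the detour through $G/\mathrm{Stab}_G(p)$.
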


\begin{proof}
    Let $\Aut^0(X)$ be the identity component of the automorphism group of $X$ and $\Iso^0(X)$ be the identity component of the isometry group of $X$. It follows from \eqref{eq:KahlerRicciShrinker} that $\xi=J\nabla f$ is a real holomorphic and Killing vector field. Therefore, it generates a 1 parameter subgroup $\phi:\bR\to \Aut^0(X)\cap\Iso^0(X), \phi(t)=\exp(t\xi)$, where $\exp$ is the exponential map. Since $\Aut^0(X)\cap\Iso^0(X)$ is compact, the closure $\mathbb{T}=\overline{\{\exp(t\xi):t\in\bR\}}$ is a compact real torus, which acts holomorphically and isometrically on $X$. 
\end{proof}

\begin{rem}
    If we regard $(X,\omega)$ as a symplectic manifold, then the action of $\mathbb{T}$ is a Hamiltonian action. We shall not use this fact. For a proof of this, see \cite{CDS24}*{Proposition 5.14}.
\end{rem}

\section{Polarized Fano fibration structures on K\"{a}hler-Ricci shrinkers}

In this section we give a brief review of the notion of polarized Fano fibrations and the fact that any K\"{a}hler-Ricci shrinker can be realized as a polarized Fano fibration. We will use $\mathbb{T}$ to denote a compact real torus and let $\mathfrak{t}$ be its Lie algebra.

We start by recalling the definition of polarized affine cones, which was introduced in \cite{CS19} to study Sasaki-Einstein metrics:
\begin{defn}[Polarized affine cone]\label{defn:PolarizedAffineCone}
    A \textit{polarized affine cone} is a triple $(Y,\mathbb{T},\xi)$ such that
    \begin{enumerate}
        \item $Y$ is a normal affine cone in $\bC^m$ cut out by homogeneous polynomials;
        \item $\mathbb{T}$ acts on $Y$ effectively and the action extends to a linear action of $\mathbb{T}$ on $\bC^m$;
        \item Let $R(Y)$ be the ring of regular functions on $Y$ and 
        \begin{equation*}
            R(Y)=\oplus_{\alpha\in\mathfrak{t}^*}R(Y)_\alpha
        \end{equation*}
        be the weight decomposition of the action of $\mathbb{T}$ on $R(Y)$. $\xi\in\mathfrak{t}$ and $\langle\alpha,\xi\rangle>0$ for all $\alpha$ such that $R_\alpha\ne 0$. Such an element is called a \textit{Reeb vector field}.
    \end{enumerate}  
\end{defn}

For later use, recall the following weight space decomposition of the linear representation of tori:
\begin{lem}\label{lem:LinearRepresentationOfTorus}
    Suppose that $\rho:\mathfrak{t}\to GL(m,\bC)$ is the linear representation of $\mathfrak{t}$ on $\bC^m$ induced by the linear action of $\mathbb{T}$ on $\bC^m$. There is a basis $v_i,i=1,\cdots,m$ and $\alpha_i\in\mathfrak{t}^*$ ($\alpha_i$'s can be the same), such that 
    \begin{equation*}
        \rho(\eta)(v_i)=\sqrt{-1}\langle\alpha_i,\eta\rangle v_i, \,\ \forall \eta\in\mathfrak{t}.
    \end{equation*}
    Moreover, for $\alpha\in\mathfrak{t}^*$ in (3) of \ref{defn:PolarizedAffineCone}, $R_\alpha\ne 0$ if and only if 
    \begin{equation*}
        \alpha=\sum_i c_i\alpha_i,\,\  c_i\in \bZ_{\ge 0}.
    \end{equation*}
\end{lem}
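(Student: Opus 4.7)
The plan is to prove the two parts as standard consequences of the representation theory of compact tori.

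\textbf{Diagonalization.} Since $\mathbb{T}$ is compact, averaging any Hermitian inner product on $\bC^m$ over $\mathbb{T}$ yields a $\mathbb{T}$-invariant one, so the image of $\mathbb{T} \to GL(m,\bC)$ lies in the unitary group $U(m)$. Then $\{\rho(t)\}_{t \in \mathbb{T}}$ is a commuting family of unitary operators, which can be simultaneously diagonalized. Picking an orthonormal basis $v_1, \ldots, v_m$ of common eigenvectors, each $v_i$ determines a character $\chi_i : \mathbb{T} \to U(1)$. Since $\mathbb{T}$ is a torus, every smooth character has the form $\exp(\eta) \mapsto \exp(\sqrt{-1}\langle\alpha_i, \eta\rangle)$ for a unique $\alpha_i \in \mathfrak{t}^*$. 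Differentiating at the identity and using that $\rho$ is the derivative of the representation of $\mathbb{T}$ yields $\rho(\eta)v_i = \sqrt{-1}\langle\alpha_i, \eta\rangle v_i$, as claimed.

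\textbf{Weight decomposition of $R(Y)$.} Let $z_1, \ldots, z_m$ be the linear coordinates dual to $v_1, \ldots, v_m$, so that $R(\bC^m) = \bC[z_1, \ldots, z_m]$. Under the induced $\mathbb{T}$-action on functions, each $z_i$ is a weight vector, and a monomial $z_1^{c_1} \cdots z_m^{c_m}$ is a weight vector whose weight is $\sum_i c_i \alpha_i$ (up to the sign choice one fixes when passing to the dual action). Such monomials form a basis of $R(\bC^m)$ consisting of weight vectors, so the weights of $R(\bC^m)$ are exactly the non-negative integer combinations of the $\alpha_i$. Because $Y$ is cut out by $\mathbb{T}$-homogeneous polynomials, the vanishing ideal $I_Y \subset R(\bC^m)$ is $\mathbb{T}$-stable, hence decomposes as the direct sum of its intersections with the weight spaces. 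Passing to the quotient $R(Y) = R(\bC^m)/I_Y$ therefore preserves the weight decomposition, and any nonzero weight of $R(Y)$ is a nonzero weight of $R(\bC^m)$. This gives the stated characterization.

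\textbf{Main obstacle.} There is essentially no analytic or geometric difficulty; the whole content is the simultaneous diagonalization of a commuting family of unitary operators together with basic $\mathbb{T}$-equivariant commutative algebra. The only point requiring mild care is the bookkeeping of sign conventions, namely whether the dual action on coordinates is $t \cdot z_i = \chi_i(t) z_i$ or $\chi_i(t)^{-1} z_i$; fixing the convention so that the weight of $z_i$ equals $\alpha_i$ makes both the statement and the identification with non-negative integer combinations consistent.
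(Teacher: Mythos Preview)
The paper states this lemma without proof, treating it as a standard fact to be recalled. Your diagonalization argument and the ``only if'' direction of the second assertion are correct and entirely standard.

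However, your argument only establishes one inclusion: every weight of $R(Y)$ lies among the weights of $R(\bC^m)$, i.e.\ among the non-negative integer combinations $\sum_i c_i\alpha_i$. The converse---that every such combination actually occurs as a weight of $R(Y)$---does not follow merely from passing to the quotient $R(\bC^m)/I_Y$, since a priori an entire weight space could land in $I_Y$. To close this gap, note that after discarding any coordinate $z_i$ vanishing identically on $Y$ (which one may always do without changing the embedding), each $z_i|_Y$ is nonzero; since $Y$ is normal, hence irreducible, $R(Y)$ is an integral domain, so every monomial $\prod_i z_i^{c_i}$ remains nonzero in $R(Y)$ and furnishes a nonzero element of weight $\sum_i c_i\alpha_i$. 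This direction is not idle: it is precisely what the paper uses immediately afterward to conclude $\langle\alpha_i,\xi\rangle>0$ from the Reeb condition, since that condition only constrains those $\alpha$ with $R(Y)_\alpha\neq 0$.
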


By the lemma and a linear change of coordinates, we may assume that $v_i=e_i,i=1,\cdots,m$ is the standard basis of $\bC^m$. Therefore, let $z_i$ be any coordinate function on $\bC^m$ and $\eta\in\mathfrak{t}$, we obtain:
\begin{equation}
    \exp(\eta)^*z_i=e^{\sqrt{-1}\langle\alpha_i,\eta\rangle}z_i.
\end{equation}

We may complexify and get a linear action of $\mathbb{T}\otimes\bC$ on $\bC^m$. let $J$ be the complex structure on the complexified Lie algebra $\mathfrak{t}\otimes\bC$. Then for any $\eta\in\mathfrak{t}$,
\begin{equation}
    \exp(J\eta)^*z_i=e^{-\langle\alpha_i,\eta\rangle}z_i.
\end{equation}

In particular, for the Reeb vector field $\xi$, we have 
\begin{equation}\label{eq:DistanceOnYUnderFlow}
    \exp(-tJ\xi)^*|z|^2=\sum_{i=1}^m\exp(-tJ\xi)^*|z_i|^2=\sum_{i=1}^m e^{2\langle\alpha_i,\xi\rangle t}|z_i|^2.
\end{equation}
Since $\langle\alpha_i,\xi\rangle>0$, \eqref{eq:DistanceOnYUnderFlow} indicates that a point on $Y$ drifts away from the origin exponentially under the flow generated by $-J\xi$. This is crucial to the proof the main theorem.

The polarized affine cone was later generalized by Sun-Zhang (cf.\cite{SZ25}) to study stability and Yau-Tian-Donaldson type conjectures under a more general frame. They introduced the polarized Fano fibration:

\begin{defn}[Polarized Fano fibration]\label{defn:PolarizedFanoFibration}
    A \textit{polarized Fano fibration} consists of the data $(\pi:X\to Y,\mathbb{T},\xi)$ which satisfies the following conditions:
    \begin{enumerate}
        \item $(Y,\mathbb{T},\xi)$ is a polarized affine cone;
        \item $X$ is a nonsingular complex variety and $\mathbb{T}$ acts on $X$ effectively;
        \item $\pi:X\to Y$ is a $\mathbb{T}$ equivariant morphism. Moreover, it is also a \textit{Fano fibration}: If $\mathcal{O}_X$ and $\mathcal{O}_Y$ are structure sheaves of $X$ and $Y$, then $\pi_*\mathcal{O}_X=\mathcal{O}_Y$. In addition, the anticanonical bundle $K_X^{-1}$ is relatively ample over $Y$ in the sense that for all large enough $k$, there are finitely many sections $s_0,\cdots,s_N$ of $K_X^{-k}$ such that
        \begin{equation*}
            X\hookrightarrow Y\times\bC P^{N}, x\mapsto (\pi(x),[s_0(x):\cdots:s_N(x)]).
        \end{equation*}
        is an embedding.
    \end{enumerate}
\end{defn}

\begin{rem}
    Sun-Zhang allowed $X$ to have klt singularites in (2). We shall not need this general definition.
\end{rem}

The theorem below on K\"{a}hler-Ricci shrinker $(X,g,J,f)$ was proved in \cite{SZ25} using algebro-geometric methods developed in \cite{Bir21} and we refer readers to Section 3 of \cite{SZ25} for a proof:

\begin{thm}[\cite{SZ25}*{Theorem 3.1}]\label{thm:PolarizedFanoFibrationOnKahlerRicciShrinker}
    Let $(X,g,J,f)$ be a K\"{a}hler-Ricci shrinker and $\mathbb{T}$ be the compact real torus, which is the closure of the 1 parameter generated by the soliton vector field $\xi$ as in \ref{lem:ToricActionOnX}. There is an associated polarized Fano fibration structure $(\pi:X\to Y,\mathbb{T},\xi)$.
\end{thm}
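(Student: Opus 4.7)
My strategy is to construct the affine base $Y$ as $\Spec R$ for a torus-graded subring $R \subset \mathcal{O}(X)$, without invoking the finite generation of $\mathcal{O}_P(X)$ (which will later be deduced from Theorem \ref{thm:PolarizedFanoFibrationOnKahlerRicciShrinker} itself), and then verify the Fano fibration axioms. To set up $R$, I would combine Lemma \ref{lem:ToricActionOnX} with the Munteanu-Wang dimension bound $\dim \mathcal{O}_d(X) < \infty$ to decompose polynomial growth sections of $K_X^{-k}$ into $\mathbb{T}$-weight spaces. For a weight vector $s$ of weight $\alpha$, the identity $\exp(-tJ\xi)^* s = e^{-t\langle\alpha,\xi\rangle} s$, combined with the exponential drift in \eqref{eq:DistanceOnYUnderFlow} and the lower bound $\tfrac12(r(x)-c_1)_+^2 \le f(x)$ from Proposition \ref{prop:FundamentalResultsOnShrinkers}, forces $\langle\alpha,\xi\rangle \ge 0$ for every weight realized by a polynomial growth section (with equality only in the constant case). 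Thus the subring $R$ generated by such weighted polynomial growth pluri-anticanonical sections automatically satisfies the Reeb positivity condition with respect to $\xi$.

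The main obstacle, and the genuinely difficult step, is showing that $R$ is finitely generated as a $\bC$-algebra. The shrinker equation $\Ricci(\omega) + \sqrt{-1}\ddbar f = \omega$ identifies $c_1(K_X^{-1})$ with the K\"ahler class up to an exact form, endowing $K_X^{-1}$ with a natural positivity. I would build an equivariant relative compactification of $X$ over an auxiliary affine model whose geometric fibers are Fano type varieties of bounded volume, and then invoke Birkar's boundedness and complements theory \cite{Bir21} to obtain both a uniform $k$ realizing the fiberwise anticanonical embedding and finite generation of the relative anticanonical ring. Passing to $\mathbb{T}$-invariants with appropriately bounded weights delivers the finite generation of $R$. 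This is the algebro-geometric heart of the Sun-Zhang argument; everything that follows is essentially formal.

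Once $R$ is finitely generated, I would choose weight-homogeneous generators to embed $Y := \Spec R$ as an affine subvariety of $\bC^m$ cut out by homogeneous polynomials; normality follows from integral closedness of $R$ inside the function field of $X$, and the $\mathbb{T}$-grading extends to a linear action on $\bC^m$ in the sense of Definition \ref{defn:PolarizedAffineCone}. The inclusion $R \hookrightarrow \mathcal{O}(X)$ yields a $\mathbb{T}$-equivariant morphism $\pi: X \to Y$ with $\pi_*\mathcal{O}_X = \mathcal{O}_Y$ by construction, and relative ampleness of $K_X^{-1}$ over $Y$ is delivered by the anticanonical embedding of the Fano fibers, producing the required embedding $X \hookrightarrow Y \times \bC P^N$ for large $k$.
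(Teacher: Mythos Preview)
The paper does not supply its own proof of this theorem; it simply cites \cite{SZ25}*{Section 3} and remarks that the argument rests on Birkar's algebro-geometric machinery \cite{Bir21}. Your outline is consistent with that description and correctly isolates the finite generation step (via Birkar's boundedness and complements) as the substantive content, treating the remaining verifications as formal.

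Two points in your sketch are worth tightening. First, pluri-anticanonical sections are not elements of $\mathcal{O}(X)$, so $R$ cannot simultaneously be a subring of $\mathcal{O}(X)$ and be ``generated by weighted polynomial growth pluri-anticanonical sections''; you need to separate the relative anticanonical ring (which furnishes the projective embedding over the base) from the ring of global functions (which defines $Y$). Second, invoking \eqref{eq:DistanceOnYUnderFlow} to establish Reeb positivity is circular, since that identity is stated on $Y$, which you have not yet constructed; the positivity should instead be read off directly on $X$ from the growth of $f$ along the gradient flow $\phi_t=\exp(-tJ\xi)$ together with \eqref{eq:BoundOnfByDistance}.
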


\begin{emp}
    The following are the polarized Fano fibration structures on some well known K\"{a}hler-Ricci shrinkers:
    \begin{enumerate}
        \item Compact K\"{a}hler-Ricci shrinker $X$: $Y$ is a point. The Fano fibration is the constant map;
        \item The Gaussian shrinker $(\bC^n,g,\frac{1}{2}|z|^2)$: the toric action on $\bC^n$ is the $\mathbb{S}^1$ action generated by the soliton vector field $\xi=\sum_{i=1}^nz_i\frac{\partial}{\partial z_i}$. The Fano fibration $\pi:\bC^n\to\bC^n$ is the identity map;
        \item The product shrinker $\bC\times\bC P^1$: the soliton vector field generates $\mathbb{T}=\mathbb{S}^1$ action on $\bC\times\bC P^1$ by $\zeta\cdot(z,p)=(\zeta z,p)$. The Fano fibration $\pi:X\to \bC$ is the projection onto the first factor;
        \item The FIK shrinker $\mathcal{O}_{\bC{P}^{n-1}}(-k),0<k<n$ (cf.\cite{FIK03}): The Fano fibration is $\pi:\mathcal{O}_{\bC P^{n-1}}(-k)\to \bC^n/\bZ_k$, the blow down morphism which contracts the divisor with self intersection $-k$. The $\mathbb{S}^1$ action on $\bC^n/\bZ_k$ is induced from the standard $\mathbb{S}^1$ action on $\bC^n$: $\zeta\cdot(z_1,\cdots,z_n)=(\zeta z_1,\cdots,\zeta z_n)$. It lifts to the $\mathbb{S}^1$ action generated by the soliton vector field $\xi$ on $\mathcal{O}_{\bC{P}^{n-1}}(-k)$. Hence, $\pi$ is $\mathbb{S}^1$ equivariant.
    \end{enumerate}
\end{emp}

\section{Proof of the main theorem}

Let $(\pi:X\to Y,\mathbb{T},\xi)$ be a polarized Fano fibration structure on a K\"{a}hler-Ricci shrinker $(X,g,J,f)$ in \ref{thm:PolarizedFanoFibrationOnKahlerRicciShrinker}. Since $\pi$ is a projective morphism, the ring of holomorphic functions $\mathcal{O}(X)$ is isomorphic to $\mathcal{O}(Y)$. In this section, we further show that the ring of holomorphic functions with polynomial growth on $X$ are those on $Y$ and thus complete the proof. 

Define
\begin{equation}
    \mathcal{O}_d(Y)=\{f\in\mathcal{O}(Y):\exists C>0, |f(z)|\leq C(1+|z|^d)\}.
\end{equation}
We call an element $f$ in $\mathcal{O}_d(Y)$ a holomorphic function with polynomial growth of order at most $d$ on $Y$. Moreover, define the ring of holomorphic functions with polynomial growth on $Y$:
\begin{equation}
    \mathcal{O}_P(Y)=\cup_{d\ge 0}\mathcal{O}_d(Y).
\end{equation}  

The following extension theorem is a consequence of the $L^2$ estimate and is well known in the theory of several complex variable (cf.\cite{Bjo74}):

\begin{thm}\label{thm:PolynomialGrowthFunctionOnAffineVarieties}
    There is an integer $n(Y)$ such that if $f\in\mathcal{O}_d(Y)$, then there is a polynomial function $P$ on $\bC^m$ of degree at most $d+n(Y)$ such that $P=f$ on $Y$. In particular, $\mathcal{O}_P(Y)=R(Y)$. 
\end{thm}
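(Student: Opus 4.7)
The plan is to extend $f$ from $Y$ to a holomorphic function on the ambient $\bC^m$ with polynomial growth of order at most $d + n(Y)$, and then to invoke the classical Liouville theorem on $\bC^m$: any $F \in \mathcal{O}(\bC^m)$ satisfying a polynomial growth bound of order $D$ is, by Cauchy's inequality applied on balls centered at the origin, a polynomial of degree at most $D$. The equality $\mathcal{O}_P(Y) = R(Y)$ then follows immediately, since the reverse inclusion $R(Y) \subset \mathcal{O}_P(Y)$ is clear from the definitions.

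First I would invoke Cartan's Theorem B on the coherent ideal sheaf $\mathcal{I}_Y \subset \mathcal{O}_{\bC^m}$ cut out by the defining polynomials of $Y$ in the Stein manifold $\bC^m$. The vanishing $H^1(\bC^m,\mathcal{I}_Y) = 0$ implies that the restriction map $\mathcal{O}(\bC^m) \to \mathcal{O}(Y)$ is surjective, so at least some holomorphic extension of $f$ exists; the real work is to pick one with controlled growth. To set up the growth analysis, I would construct a smooth (not necessarily holomorphic) extension $\tF$ of $f$ with $|\tF(z)| \leq C(1+|z|)^d$ on all of $\bC^m$, for instance by patching local holomorphic extensions on a tubular neighborhood of $Y$ against a zero extension outside, using a partition of unity. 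Then $\bar\partial \tF$ is a smooth, globally defined $(0,1)$-form whose coefficients lie in the ideal of $Y$ and decay suitably at infinity.

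The heart of the argument is to solve $\bar\partial u = \bar\partial \tF$ with $u|_Y = 0$ and with controlled polynomial growth, using H\"ormander's $L^2$ method with the plurisubharmonic weight $\varphi(z) = (d+N)\log(1+|z|^2)$ for a sufficiently large integer $N = N(Y)$. The desired holomorphic extension is then $F := \tF - u$, which is holomorphic on $\bC^m$, restricts to $f$ on $Y$, and satisfies $\int_{\bC^m} |F|^2 e^{-\varphi}\, dV < \infty$. A standard sub-mean value argument on unit balls then converts this $L^2$ bound into the pointwise estimate $|F(z)| \leq C'(1+|z|)^{d+n(Y)}$, with $n(Y)$ depending only on $N$ and the ambient dimension $m$; Cauchy estimates then identify $F$ with a polynomial of the required degree.

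The main obstacle will be arranging that $u$ simultaneously vanishes on $Y$ and obeys the prescribed $L^2$ growth. The natural tool is Skoda's $L^2$ division theorem applied to a fixed system of homogeneous generators $P_1,\ldots,P_r$ of $\mathcal{I}_Y$: one writes $u = \sum_{i=1}^{r} h_i P_i$ and solves a coupled H\"ormander $\bar\partial$-system for the $h_i$, with the weights adjusted to absorb the degrees $\deg P_i$. The integer $n(Y)$ appearing in the final degree bound is then determined by the degrees of the $P_i$, the codimension of $Y$ in $\bC^m$, and the dimensional loss in the sub-mean value inequality; it depends only on the embedding $Y \hookrightarrow \bC^m$ and is independent of $d$ and $f$. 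This matches the statement attributed to \cite{Bjo74}.
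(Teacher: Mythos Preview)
The paper does not give its own proof of this theorem; it is quoted as a known result, attributed to \cite{Bjo74}, with the prefatory remark that it ``is a consequence of the $L^2$ estimate and is well known in the theory of several complex variables.'' Your outline is exactly the standard $L^2$ argument that remark alludes to---H\"ormander's weighted $\bar\partial$-estimates together with Skoda's division theorem to force the correction term into the ideal of $Y$---so there is no substantive comparison to make: you are reconstructing the proof the paper is citing rather than replacing.

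One small technical caution: $Y$ is only assumed to be a normal affine cone, so it may be singular, and the phrase ``tubular neighborhood of $Y$'' for building the smooth polynomially-bounded extension $\tF$ needs care near $\mathrm{Sing}(Y)$. This is not a real obstacle---one can, for instance, take any holomorphic extension on a large ball (via Cartan's Theorem~B) and cut it off radially, or work directly with the generators $P_1,\dots,P_r$ from the outset---but it is worth flagging so that the construction of $\tF$ does not silently assume smoothness.
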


\begin{rem}
    Note that the definition of $\mathcal{O}_P(Y)$ relies on a choice of the affine embedding $Y\hookrightarrow\bC^m$. The above theorem shows that $\mathcal{O}_P(Y)$ is indeed independent of the choice of the embedding, even though $\mathcal{O}_d(Y)$ does. 
\end{rem}

We are now able to prove \ref{thm:MainTheorem}. We firstly show the comparison between $f(x)$ and $|z(\pi(x))|$:

\begin{lem}
    For any $0<\beta<1-\gamma$, there are constants $C,c,K>0$ depending on the polarized Fano fibration structure on $X$ and $\beta$, such that for any point $x\in X$, the following inequalities hold:
    \begin{equation}\label{eq:EstimatesOnf}
        c|z(\pi(x))|^{\frac{2\beta}{\Lambda}}-K\leq f(x)\leq C|z(\pi(x))|^{\frac{2}{\lambda}}+K.
    \end{equation}
    Here 
    \begin{equation*}
        \lambda=\min_i\langle\alpha_i,\xi\rangle, \Lambda=\max_i\langle\alpha_i,\xi\rangle.
    \end{equation*}
\end{lem}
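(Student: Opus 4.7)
The strategy is to use the backward gradient flow of $f$ on $X$. Since $\nabla f = -J\xi$, this flow descends via the $\mathbb{T}$-equivariant morphism $\pi$ to the inverse Reeb flow on $Y$, and by \eqref{eq:DistanceOnYUnderFlow} that flow contracts $|z|$ at a rate sandwiched between $\lambda$ and $\Lambda$. The plan is to match the two flows at a compact slice $\partial U = \{f = F_0\}$: the time $T = T(x)$ needed to flow $x$ back to $\partial U$ can be estimated from $f(x)$ using the gradient estimate, and from $|z(\pi(x))|$ using \eqref{eq:DistanceOnYUnderFlow}, and the two estimates can then be combined.

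The first step is the choice of $F_0$. I would pick it large enough that two things hold simultaneously: (a) whenever $f(x) > F_0$, the distance bound \eqref{eq:BoundOnfByDistance} forces $r(x) > r_\beta$, so the gradient estimate $|\nabla f|^2 \geq \beta f$ of \ref{lem:GradientEstimate} holds throughout $X \setminus U$ with $U := \{f \leq F_0\}$; and (b) the compact fiber $\pi^{-1}(0)$ is contained in $\{f < F_0\}$. Condition (b), which uses only that $\pi^{-1}(0)$ is a compact subvariety (a fiber of a projective morphism), is essential because it forces $0 \notin \pi(\partial U)$. Since $\partial U$ is compact by properness of $f$, this yields uniform constants $0 < m \leq |z(\pi(y))| \leq M$ on $\partial U$.

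The second step integrates the backward flow $y(t) := \exp(-t\nabla f)(x)$, which stays in the compact set $\{f \leq f(x)\}$ and therefore exists for all $t \geq 0$, reaching $\partial U$ at a unique finite time $T > 0$. Along this flow, $\tfrac{d}{dt} f(y(t)) = -|\nabla f|^2_{\mathrm{Riem}} = -2|\nabla f|^2$, where $|\nabla f|^2$ is the convention of the paper, and the two-sided bound $\beta f \leq |\nabla f|^2 \leq f$ on $X \setminus U$ (the first inequality from \ref{lem:GradientEstimate}, the second from $R \geq 0$ combined with \ref{prop:FundamentalResultsOnShrinkers}(1)) integrates to
$$
(f(x)/F_0)^{1/2} \;\leq\; e^{T} \;\leq\; (f(x)/F_0)^{1/(2\beta)}.
$$
By $\mathbb{T}$-equivariance, $\pi(y(t)) = \exp(tJ\xi)(\pi(x))$, and \eqref{eq:DistanceOnYUnderFlow} together with $\lambda \leq \langle \alpha_i, \xi\rangle \leq \Lambda$ yields
$$
e^{2\lambda T}|z(\pi(y(T)))|^2 \;\leq\; |z(\pi(x))|^2 \;\leq\; e^{2\Lambda T}|z(\pi(y(T)))|^2.
$$
Substituting the uniform bounds $m \leq |z(\pi(y(T)))| \leq M$ and matching with the two inequalities on $e^T$ gives the desired upper and lower estimates on $f(x)$ for $x \notin U$. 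The additive constant $K$ is then chosen large enough to absorb the case $x \in U$, where both $f$ and $|z \circ \pi|$ are bounded by compactness of $U$.

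The main obstacle is precisely Step 1(b): one must ensure that the slice $\partial U$ avoids the vertex of the affine cone. Without this, $m$ could vanish and the upper estimate on $f$ would degenerate---geometrically, the backward flow could exit $\partial U$ arbitrarily close to the vertex, so $|z(\pi(x))|$ would fail to control $f(x)$ from above. Once the vertex is safely placed in the interior of $U$, the remainder of the argument is a clean bookkeeping of the two exponential rates.
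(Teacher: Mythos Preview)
Your argument is correct and is essentially the paper's own proof, run in reverse: the paper flows \emph{forward} along $\nabla f=-J\xi$ from a compact shell $W=\pi^{-1}(\overline B)\setminus V$ and uses that $\bigcup_{t\ge 0}\phi_t(W)\supset X\setminus V$, whereas you flow \emph{backward} from an arbitrary $x$ until you hit the level set $\{f=F_0\}$. Your explicit vertex--avoidance condition $\pi^{-1}(0)\subset U$ (guaranteeing $m>0$ on $\partial U$) is exactly what underlies the paper's use of $\min_W|z\circ\pi|>0$, and the matching of the two exponential rates via \eqref{eq:DistanceOnYUnderFlow} and the two--sided bound $\beta f\le|\nabla f|^2\le f$ is identical in both versions.
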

\begin{proof}
    Let $B_{r_\beta}(p)$ be the ball in \ref{lem:GradientEstimate} and $V=\pi^{-1}(\pi(\overline{B_{r_\beta}(p)}))$. Then $V$ is compact since $\pi$ is a proper morphism. It suffices to establish \eqref{eq:EstimatesOnf} on $X\setminus V$.
    
     To that end, let $B\supset V$ be an open ball in $Y$ and $W=\pi^{-1}(\overline{B}\setminus V)$. $W$ is also a compact subset of $X$. Moreover, let $\phi_t=\exp(-J\xi t)$ be the 1 parameter subgroup generated by $\nabla f= -J\xi$ and note that $\phi_t(W)\supseteq X\setminus V$ since $f$ has no critical points outside $V$. 
    
    Let $x\in W$, by (2) of \ref{prop:FundamentalResultsOnShrinkers},
    \begin{equation*}
        \frac{d}{dt}f(\phi_t(x))=|\nabla f(\phi_t(x))|^2\leq f(\phi_t(x)).
    \end{equation*}
    It follows that 
    \begin{equation}\label{eq:EstimateOnf}
        f(\phi_t(x))\leq f(x)e^{t}\leq e^t\max_W f, \forall t\ge 0.
    \end{equation}
    On the other hand, \eqref{eq:DistanceOnYUnderFlow} implies 
    \begin{equation}\label{eq:EstimateOnz}
        |z(\pi(\phi_t(x)))|^2=(\exp(-tJ\xi)^*|z|^2)(\pi(x))\ge e^{2\lambda t}\min_W|z\circ\pi|^2, \forall t\ge 0.
    \end{equation}
    Combine \eqref{eq:EstimateOnf} and \eqref{eq:EstimateOnz}, which implies that
    \begin{equation}
        f(\phi_t(x))\leq \left(\frac{\max_Wf}{\min_W |z\circ\pi|^{\frac{2}{\lambda}}}\right)|z(\pi(\phi_t(x)))|^{\frac{2}{\lambda}}, \forall t\ge 0.
    \end{equation}
    Therefore, we get
    \begin{equation}\label{eq:UpperBoundOnf}
        f(x)\leq C'|z(\pi(x)|^{\frac{2}{\lambda}}, \forall x\in X\setminus V\Rightarrow f(x)\leq C|z(\pi(x)|^{\frac{2}{\lambda}}+K, \forall x\in X.
    \end{equation}

    For the other direction, we need to make use of the assumption on the scalar curvature \eqref{eq:ScalarCurvatureBound}. By \ref{lem:GradientEstimate}, the following holds on $X\setminus V$:
    \begin{equation*}
        |\nabla f|^2\ge \beta f.
    \end{equation*}
    In particular, let $x\in W \subseteq X\setminus V$, we have 
    \begin{equation*}
        \frac{d}{dt}f(\phi_t(x))=|\nabla f(\phi_t(x))|^2\ge \beta f(\phi_t(x)).
    \end{equation*}
    This implies 
    \begin{equation}
        f(\phi_t(x))\ge e^{\beta t}f(x) \ge e^{\beta t}\min_Wf.
    \end{equation}
    Combine with 
    \begin{equation}
        |z(\pi(\phi_t(x)))|^2=(\exp(-tJ\xi)^*|z|^2)(\pi(x))\leq e^{2\Lambda t}\max_W|z\circ\pi|^2
    \end{equation}
    and we get
    \begin{equation}
        f(\phi_t(x))\ge\left(\frac{\min_W f}{\max_W|z\circ\pi|^{\frac{2\beta}{\Lambda}}}\right)|z(\pi(\phi_t(x)))|^{\frac{2\beta}{\Lambda}}.
    \end{equation}
    Thus the lower bound
    \begin{equation}
        f(x)\ge c|z(\pi(x))|^{\frac{2\beta}{\Lambda}}-K,\forall x\in X
    \end{equation}
    follows.
\end{proof}

Together with the estimate \eqref{eq:BoundOnfByDistance} on $f(x)$ by $r(x)$, this yields the comparison between $r(x)$ and $|z(\pi(x))|$: there are $C',c',K'>0$ such that
\begin{equation}\label{eq:CompareTheDistances}
    c'|z(\pi(x))|^{\frac{2\beta}{\Lambda}}-K'\leq r(x)^2 \leq C'|z(\pi(x))|^{\frac{2}{\lambda}}+K'.
\end{equation}

This shows that the distance function $r(x)$ on $X$ and the ``distance'' $|z|$ on $Y$ are comparable. From this, we may finish the proof of the main theorem

\begin{proof}[Proof of Main Theorem]
    Since $\pi$ has compact fiber, by maximum principle, holomorphic functions on $X$ are pullbacks of those on $Y$. Therefore, we may identify $\mathcal{O}(X)$ with $\mathcal{O}(Y)$. It follows from \eqref{eq:CompareTheDistances} that 
    \begin{equation*}
        \mathcal{O}_d(X)\subseteq\mathcal{O}_{\lceil\frac{d}{\lambda}\rceil}(Y).
    \end{equation*}
    Hence $\mathcal{O}_P(X)\subseteq\mathcal{O}_P(Y)$. Similarly, the lower estimate on $r$ in \eqref{eq:CompareTheDistances} yields the other direction: $\mathcal{O}_P(Y)\subseteq\mathcal{O}_P(X)$. Therefore, by \ref{thm:PolynomialGrowthFunctionOnAffineVarieties}, $\mathcal{O}_P(X)=\mathcal{O}_P(Y)=R(Y)$. In particular, $\mathcal{O}_P(X)$ is finitely generated. 
\end{proof}

\begin{rem}
    The curvature assumption \eqref{eq:ScalarCurvatureBound} is necessary for our argument. Without the assumption, it can only be shown that $\mathcal{O}_P(X)\subseteq\mathcal{O}_P(Y)=R(Y)$ is a subring of the finitely generated ring $R(Y)$. This doesn't imply the finite generation of $\mathcal{O}_P(X)$. For example, $\bC[xy,xy^2,\cdots]\subset\bC[x,y]$ is not finitely generated.
\end{rem}

\bibliography{Reference}

\end{document}